\newtheorem{lemma}{Lemma}
\newtheorem{theorem}{Theorem}
\newtheorem{proof}{Proof}
\title{Latency Analysis of LEO Satellite Relay Communication: An Application of Conditional Contact Angle Distribution\\
\thanks{
Corresponding author: Xiang Ling, E-mail: xiangling@uestc.edu.cn. The condensed version of this article has been submitted to 2023 7th International Conference on Communication and Information Systems (ICCIS 2023) held in Chongqing, China.}
}
\author{\IEEEauthorblockN{Sixi Cheng and Xiang Ling} \\ 
\IEEEauthorblockA{\textit{National Key Laboratory of Science and Technology on Communications} \\
\textit{University of Electronic Science and Technology of China}\\
Chengdu, China \\
Email: chengsixi@std.uestc.edu.cn, xiangling@uestc.edu.cn.}}
\begin{document}
\maketitle

\begin{abstract}
This article investigates the transmission delay of a Low Earth Orbit (LEO) satellite communication system in a bent pipe structure. By employing a stochastic geometry framework, satellites are modeled as spherical binomial point processes (BPP). A suboptimal satellite relay selection strategy is proposed, which achieves optimal conditions through theoretical analysis and numerical exploration. We derive the distance distributions for the uplink and downlink links, and provide corresponding analytical expressions for the transmission delays.
\end{abstract}

\begin{IEEEkeywords}
  Stochastic geometry, transmission delay, binomial point process, distance distribution, best relay selection.
\end{IEEEkeywords}
\section{Introduction}

Ultra-dense LEO satellite network, because of its seamless global coverage characteristics, is likely to be utilized as a vital part of the future 6G system \cite{de2021survey,kodheli2020satellite}.
Companies such as SpaceX, Telesat, and OneWeb are accelerating the formation of a network of tens of thousands of LEO satellites \cite{sheetz2019next}. 
In real time communication scenarios, satellites fundamentally play the role of a space relay or forwarder to connect two terrestrial stations \cite{ma2022secure}, which can be considered as a terrestrial-satellite-terrestrial unit in long-distance transmission. This leads to a fact that the performance of a terrestrial-satellite-terrestrial unit is worthy to analyse.

Stochastic geometry, as an effective mathematical tool, plays a particularly important role in analyzing the performance of satellite networks \cite{wang2022ultra,tian2023satellite}. BPP model, which is relatively accurate for closed area networks with a fixed number of satellites, is used in \cite{9079921} to analyse the coverage and rate of downlink. The user coverage probability for a scenario where satellite gateways (GWs) are deployed on the ground to act as a relay between the users and the LEO satellites is studied in \cite{talgat2020stochastic}.
Most of the authors of the focus on the downlink transmission performance of satellite-terrestrial, while the uplink transmission performance has not been extensively studied. \cite{wang2022conditional} introduces contact angle distribution to analyze the influences of the number of satellites and the distance between the transmitter and receiver, which does not consider any channel fading \cite{wang2023reliability}. As the Rician fading model is ubiquitous for the communication links between satellites and ground stations, we adopt shadowed-Rician (SR) fading for the channel between the satellite and the terrestrial station, which is pointed as most accurate channel model.

As for relay selection strategy, \cite{belbase2018coverage} proposes nearly optimal protocol in dual hop scenario. Therefore, based on the existing research, the contributions of this work are summarized as follows.

\begin{itemize}
    \item We give a possible optimal relay selection strategy and explore under what conditions the relay selection strategy is approximately optimal.
    \item Under the certain relay selection strategy, we derive analytical expressions of the distance distribution of uplink and downlink and respectively give expressions for the cumulative distribution function of the signal-to-noise ratio (SNR), which take channel fading into consideration.
    \item Based on above, we derive analytical expressions of the total transmission delay. By simulation, we verify the accuracy of the total transmission delay and investigate the effect of power, number of satellites, and distance between transmitter and receiver on the total transmission delay.
    \item In the simulation, stochastic geometry is used to analyze the performance of satellite communications over multiple hops (or multiple links), which is not found in the existing papers.
\end{itemize}


\begin{figure}[t]
  \centering
  \includegraphics[width=0.8\linewidth]{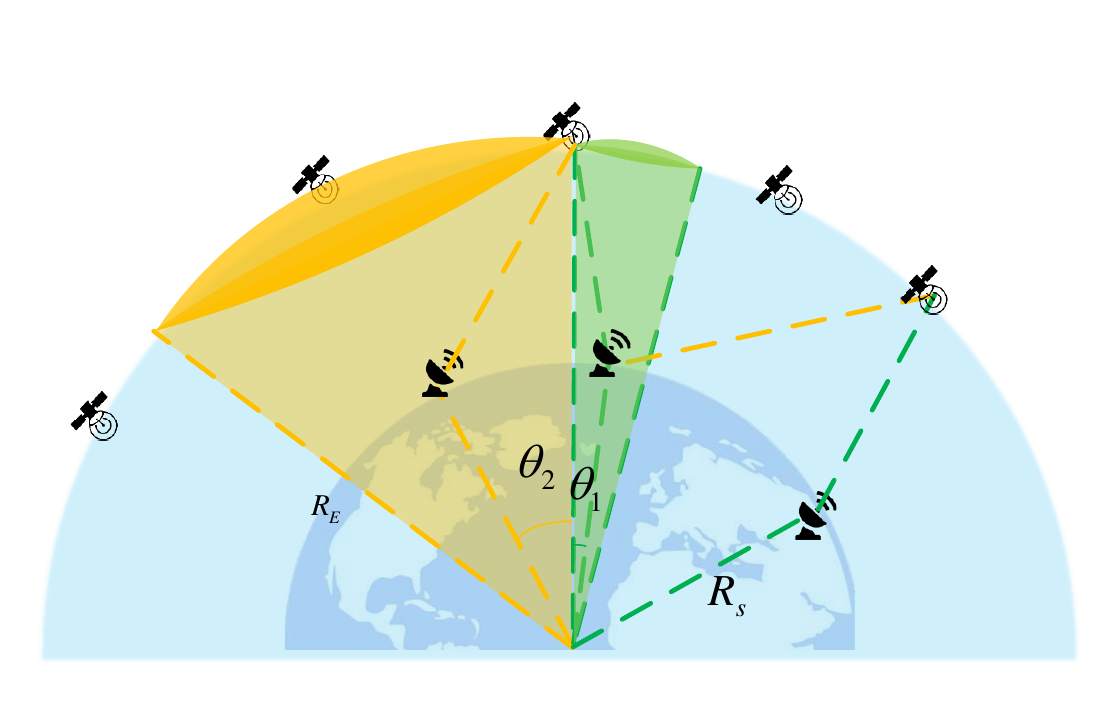}
  \caption{System model.}
  \label{sliding_window}
\end{figure}

\section{System Model}
\subsection{Network Topology}
In this subsection, we build a terrestrial-satellite-terrestrial relay communication model. $N_{s}$ satellites are distributed on a spherical surface with radius $R_s$ and form a homogeneous BPP \cite{wang2022evaluating}. Since the BPP distribution remains the same after the rotation, the coordinates of the transmitter and receiver are set to at ($R_e$,0,0) and ($R_e$,$\Theta$,0) for the convenience of calculation.
The radius of the Earth is denoted as $R_e$ = 6371km.
Here, we consider that the terrestrial stations are fixed and the satellites obey the BPP distribution.

Although we analyze only one terrestrial-satellite-terrestrial unit here, in the simulation part we analyse the long-distance transmission with multi units. This implies that the unit model is also applicable to the long-distance transmission models.

\subsection{Relay Selection}
In fixed topology Amplify and Forward (AF) or Decode and Forward (DF) networks, the optimal relay selection criterion is the maximization of end-to-end SNR or the maximum SNR of downlink.
However, when the relay satellites form a BPP, analysis of this strategy is intractable. Therefore, we consider a slightly suboptimal but tractable selection strategy \cite{lou2023coverage}: First, find a set of relays which can provide reliable communication for both the transmitter and receiver. Then, select the relay that has the strongest average received power in downlink. 
The reason for choosing the the strongest average received power in downlink is that: (i) In AF, the relay satellite amplifies the signal and also amplifies the noise. As a result, the downlink noise must be larger than the uplink. (ii) Considering that satellite energy is expensive, satellite transmission power is relatively low. 


\subsection{Channel Model}

Many works have focused on deriving an accurate channel model for the communication links between satellites and ground stations, where it was shown that shadowed-Rician (SR) model proposed in \cite{1198102} is the most accurate. So, we consider the SR model to calculate fade margins and analyze the performance of communication.

For the uplink, the received signal power $p_{(2)}^s$ at relay satellite is
\begin{equation}
p_{(2)}^s = p_{(1)}^sA_u \times W_{t}^2,
\end{equation}
where $A_u$ and $W_{t}^2$ respectively represent the propagation loss and the SR fading. The propagation loss can be calculated by:
\begin{equation}
A_u = \frac{G_{(1)}^e G_{(2)}^s \lambda_u^2}{(4 \pi d_{\mathrm{up}})^2L_{(1)}^e L_{(2)}^s L_{\mathrm{add}}},
\end{equation}
where $G_{(1)}^e$ and $G_{(2)}^s$ denote the transmitter and receiver antenna gain, $\lambda_u$ denotes the carrier wavelength of uplink, $d_{\mathrm{up}}$ is the distance between the transmitter and the relay satellite, $L_{(1)}^e$ and $L_{(2)}^s$ denote the transmitter and receiver antenna feeder loss, $L_{\mathrm{add}}$ denotes the link additional loss, including atmospheric absorption loss, rain attenuation, etc.

The probability density function (PDF) of the SR fading power \cite{1198102} $W_t^2$ is given as follows:
\begin{equation}
\begin{split}
f_{W_t^2}(t) &= \left(\frac{2b_0m}{2b_0m+\Omega}\right)^m \frac{1}{2b_0} \exp \left(-\frac{t}{2b_0}\right) 
\\
& \times \sum\limits_{n=0}^{\infty }\frac{(m)_n}{(1)_n n!}{{ \left(\frac{\Omega \,t}{2b_0(2b_0m+\Omega )} \right)}^{n}}, \ \ \ \ t \geq 0,
\end{split}
\end{equation}
where $(\cdot)_{n}$ is the Pochhammer symbol, while $m$, $b_0$ and $\Omega$ are the parameters of the SR fading.
With the system model above, the received SNR for a link is given by
\begin{equation}
\label{4}
{\mathrm{SNR}}_\mathrm{up} = \frac{p_{(2)}^s}{N_u}
= \frac{p_{(1)}^eG_{(1)}^e G_{(2)}^s \lambda_u^2W_t^2}{(4 \pi d_{\mathrm{up}})^2L_{(1)}^e L_{(2)}^s L_{\mathrm{add}}N_u}
, 
\end{equation}
where $N_u$ is the noise power of uplink. $N_u$ can be calculated by $N_u = kBT_{u}$, where $k$ is the Boltzmann's constant, $B$ is the bandwidth of transmission and $T_{u}$ denotes the total network noise temperature of the uplink.

The received SNR for a link is given by
\begin{equation}
\label{5}
{\mathrm{SNR}}_\mathrm{down} = \frac{p_{(2)}^e}{N_d}
= \frac{p_{(1)}^sG_{(1)}^s G_{(2)}^e \lambda_d^2W_t^2}{(4 \pi d_{\mathrm{down}})^2L_{(1)}^s L_{(2)}^e L_{\mathrm{add}}N_d},
\end{equation}
where $N_d$ is the noise power of downlink and $p_{(2)}^e$ can be calculated as the same as $p_{(2)}^s$ with downlink parameters.


\section{Time delay analysis}

\subsection{Distance Distribution}


In order to contribute expressions for average time delay in the following sections, we first need to characterize some basic distance distributions that stem from the stochastic geometry of the considered system.
According to the relay selection strategy mentioned above, the downlink distance $d_{\mathrm{down}}$ is a independent from $\theta_2$.

Since the correspondence between the central angle (the angle of the line from two points to the center of the Earth \cite{wang2022stochastic}) and distance is bijective, the distance distribution can be calculated via calculating the distribution of the central angle (the angle of the line between two points and the center of the Earth).

To facilitate the calculation of the uplink distance distribution, we use the CDF of the central angle. The central angle of the receiver and the relay satellite is denoted as $\theta_1$, and the central angle of the transmitter and the relay satellite is denoted as $\theta_2$.

\begin{lemma}\label{lemma1}
The PDF of the $\theta_1$ from any specific one of the satellites in the constellation to the receiver is given by
\begin{equation}\label{fpsinn}
\begin{split}
    f_{\theta_1}(\theta) = \frac{N_s \sin\theta}{2} \left( \frac{ 1 + \cos\theta }{2} \right)^{N_s-1},0\leq \theta \leq \pi.
\end{split}
\end{equation}
\begin{proof}
See Appendix~\ref{app:lemma1}.
\end{proof}
\end{lemma}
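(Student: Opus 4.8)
\emph{Proof strategy.} The plan is to reduce \eqref{fpsinn} to a standard order-statistics computation for points that are uniform on a sphere. The first step is to identify $\theta_1$ precisely. Under the relay-selection rule introduced above, the chosen relay maximises the \emph{average} received power in the downlink; since that average power is a strictly decreasing function of the relay--receiver distance $d_{\mathrm{down}}$, which is itself a strictly increasing function of the central angle between the relay and the receiver, the selected relay is simply the satellite whose central angle to the receiver is smallest. Thus, writing $\Phi_1,\dots,\Phi_{N_s}$ for the central angles of the $N_s$ BPP satellites to the receiver, one has $\theta_1 = \min_{i}\Phi_i$. (The feasibility constraint in the selection rule is treated separately; here the feasible set is taken to be the whole constellation, which is the case relevant to \eqref{fpsinn}.)

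The second step is the law of a single $\Phi_i$. Because the constellation is a homogeneous BPP on the sphere of radius $R_s$, the $\Phi_i$ are i.i.d.\ and each satellite is uniformly distributed on that sphere. By rotational invariance I may place the receiver on the polar axis, so $\{\Phi_i \le \theta\}$ is exactly the spherical cap of half-angle $\theta$; its area is $2\pi R_s^{2}(1-\cos\theta)$, and dividing by the total surface area $4\pi R_s^{2}$ gives $\Pr[\Phi_i\le\theta] = (1-\cos\theta)/2$ for $0\le\theta\le\pi$ (equivalently, $\cos\Phi_i$ is uniform on $[-1,1]$).

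The third step assembles the order statistic. By independence, $\Pr[\theta_1>\theta] = \prod_{i=1}^{N_s}\Pr[\Phi_i>\theta] = \bigl((1+\cos\theta)/2\bigr)^{N_s}$, hence $F_{\theta_1}(\theta) = 1-\bigl((1+\cos\theta)/2\bigr)^{N_s}$; differentiating in $\theta$ and using $\tfrac{d}{d\theta}\bigl((1+\cos\theta)/2\bigr)=-\tfrac12\sin\theta$ yields $f_{\theta_1}(\theta) = \tfrac{N_s}{2}\sin\theta\,\bigl((1+\cos\theta)/2\bigr)^{N_s-1}$ on $[0,\pi]$, which is \eqref{fpsinn}. (The same expression arises if one instead fixes one satellite, computes the probability that it is the relay \emph{and} has central angle at most $\theta$, and multiplies by $N_s$ by exchangeability, matching the ``any specific one of the satellites'' phrasing.) The only genuinely delicate point is the first step: justifying that ``strongest average downlink power'' coincides with ``minimum central angle to the receiver'' and that this $\theta_1$ is the one used in the later delay analysis; the spherical-cap area and the differentiation are routine.
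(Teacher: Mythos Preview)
Your proposal is correct and follows essentially the same route as the paper: compute the single-satellite cap probability $(1-\cos\theta)/2$, take the product over the $N_s$ i.i.d.\ satellites to get $F_{\theta_1}(\theta)=1-\bigl((1+\cos\theta)/2\bigr)^{N_s}$, and differentiate. Your first step, linking ``strongest average downlink power'' to ``minimum central angle,'' is more explicit than the paper, which simply invokes that the serving satellite is the nearest one; otherwise the arguments coincide.
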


\begin{lemma}\label{lemma2}
The downlink distance distribution $d_{\mathrm{up}}$ is given by
\begin{equation}\label{eq2}
\begin{split}
	F_{d_{\mathrm{down}}}(d_0)
 &=\left\{
	\begin{aligned}
            &0  , d_0 \leq R_s-R_e\\
            & 1-\left(1-\left( \frac{d_0^2-(R_s-R_e)^2}{4R_eR_s} \right)\right)^{ N_{s} }, 
            \mathrm{otherwise}\\
		&1  ,d_0 \geq R_s+R_e\\	
	\end{aligned}
	\right.
 \end{split}
\end{equation}

\begin{proof}
See Appendix~\ref{app:lemma2}.
\end{proof}
\end{lemma}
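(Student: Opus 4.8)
The plan is to transfer Lemma~\ref{lemma1} through the deterministic map between the central angle and the chord length. Under the relay-selection rule, the downlink relay is the satellite that maximizes the average received power at the receiver, and in \eqref{5} the only geometry-dependent factor of the downlink propagation loss is $(4\pi d_{\mathrm{down}})^{-2}$; hence ``largest average received power'' is equivalent to ``smallest $d_{\mathrm{down}}$'', so the serving satellite is the one subtending the smallest central angle $\theta_1$ at the Earth's center. That minimum angle is precisely the variable whose density appears in Lemma~\ref{lemma1} (the factor $N_s$ and the exponent $N_s-1$ already encode the selection of the nearest of the $N_s$ BPP points).

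First I would relate $d_{\mathrm{down}}$ and $\theta_1$ via the law of cosines in the triangle with vertices at the Earth's center, the receiver (radius $R_e$) and the satellite (radius $R_s$): $d_{\mathrm{down}}^2 = R_e^2 + R_s^2 - 2R_eR_s\cos\theta_1$. Since the right-hand side is strictly increasing in $\theta_1$ on $[0,\pi]$, the event $\{d_{\mathrm{down}}\le d_0\}$ coincides with $\{\theta_1\le\theta_0\}$, where $\cos\theta_0 = (R_e^2+R_s^2-d_0^2)/(2R_eR_s)$, so $F_{d_{\mathrm{down}}}(d_0)=F_{\theta_1}(\theta_0)$. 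Next I would integrate the density of Lemma~\ref{lemma1}: the substitution $v=(1+\cos u)/2$ gives the closed form $F_{\theta_1}(\theta)=1-\bigl((1+\cos\theta)/2\bigr)^{N_s}$ on $[0,\pi]$. Finally, substituting $\cos\theta_0$ and simplifying $(1+\cos\theta_0)/2 = ((R_e+R_s)^2-d_0^2)/(4R_eR_s) = 1 - (d_0^2-(R_s-R_e)^2)/(4R_eR_s)$ yields the middle branch of \eqref{eq2}; the two outer branches are read off from the support of $d_{\mathrm{down}}$, which ranges from $R_s-R_e$ (satellite at zenith, $\theta_1=0$) to $R_s+R_e$ (antipodal satellite, $\theta_1=\pi$).

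I expect the main obstacle to be conceptual rather than computational: one must justify that the ``select a set of reliable relays, then keep the strongest'' procedure makes $d_{\mathrm{down}}$ equal to the \emph{unconditional} nearest-satellite distance over the BPP --- i.e., that the satellite closest to the receiver is (essentially always) among the admissible relays, or else adopt this as the working assumption of the relay-selection model. Once that is granted, the rest is a routine trigonometric substitution together with the order statistic already packaged in Lemma~\ref{lemma1}, and the boundary cases are immediate from the geometry.
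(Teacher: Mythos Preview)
Your argument is correct and essentially equivalent to the paper's, but the paper takes a shortcut you do not: in Appendix~\ref{app:lemma2} it simply imports the PDF of the nearest-satellite distance from \cite{9079921} (their equation \eqref{fdown}) and then writes down the corresponding CDF. You instead reconstruct this PDF/CDF from scratch by pushing the order-statistic law of Lemma~\ref{lemma1} through the law-of-cosines bijection $\theta_1\mapsto d_{\mathrm{down}}$. Your route is more self-contained (it needs nothing beyond Lemma~\ref{lemma1}), whereas the paper's route avoids repeating a computation already done in the cited reference. One small redundancy: you integrate \eqref{fpsinn} to recover $F_{\theta_1}(\theta)=1-\bigl((1+\cos\theta)/2\bigr)^{N_s}$, but that CDF is already displayed in the proof of Lemma~\ref{lemma1} before differentiation, so you could simply quote it.
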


Due to the relay selection strategy, $\theta_1$ is an independent variable while $\theta_2$ is a random variable associated with $\theta_1$.

It is important to note that we consider the probability of a satellite appearing in a circle ring with a fixed central angle $\theta_1$ to be uniformly distributed. However, the probability of a satellite appearing in a circle ring with a fixed central angle $\theta_2$ is weighted by $f_{\theta_1}(\theta)$.


\begin{lemma}\label{lemma3}
Given that the maximum central angle of the transmitter’s spherical cap is $\beta$, the approximate CDF of $ F_{\theta_2}(\beta)$ is given by
\begin{equation}
    \begin{split}
    F_{\theta_2} \left(\beta\right) = \int_0^{2\pi} \int_0^{\beta} \frac{f_{\theta_1}\left( \psi(\theta,\phi) \right)}{2\pi R_s^2 \sin\psi(\theta,\phi)}  R_s^2 \sin\theta \mathrm{d}\theta \mathrm{d}\phi,
\end{split}
\end{equation}
where $\psi(\theta,\phi)$ is represented by
\begin{equation}
\begin{split}
    &\psi(\theta,\phi) = 2\arccos \frac{R_s^2+R_e^2-\mathcal{D}^2(\theta,\phi,\Theta,0)}{2R_sR_e},
\end{split}
\end{equation}
and $\Phi$ is the central angle between the transmitter and receiver. ${\mathcal{D}^2(\theta_1,\phi_1,\theta_2,\phi_2)}$  is an operator and can be expressed by

\begin{equation}
\begin{split}
&{\mathcal{D}^2(\theta_1,\phi_1,\theta_2,\phi_2)}={R_e^2}+{R_s^2}
\\
&-2{R_eR_s}(1-(\cos {{\phi }_{1}}\cos {{\phi }_{2}}\cos ({{\theta }_{1}}-{{\theta }_{2}})+\sin {{\theta }_{1}}\sin {{\theta }_{2}))}.
\end{split}
\end{equation}

\begin{proof}
See Appendix~\ref{app:lemma3}.
\end{proof}
\end{lemma}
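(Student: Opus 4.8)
The plan is to compute $F_{\theta_2}(\beta)$ as the probability that the selected relay satellite lies within the transmitter's spherical cap of half-angle $\beta$, but with the crucial caveat that the relay is \emph{not} uniformly distributed on the sphere: it was chosen (via the relay selection strategy) according to the downlink geometry, so its spatial density is reweighted by the law of $\theta_1$. First I would set up spherical coordinates $(\theta,\phi)$ on the satellite sphere of radius $R_s$ centered at the transmitter's ``viewpoint,'' so that $\theta$ ranges over $[0,\beta]$ exactly when the satellite is inside the transmitter's cap. The area element on that sphere is $R_s^2 \sin\theta\, \mathrm{d}\theta\, \mathrm{d}\phi$. Next I would express the effective density of the selected relay at a point $(\theta,\phi)$: since the selection weights a satellite by $f_{\theta_1}(\psi)$ where $\psi = \psi(\theta,\phi)$ is the central angle subtended at the Earth's center between that point and the receiver, and since $f_{\theta_1}$ is a density with respect to the angle $\psi$ while we want a density per unit area on the sphere, I must divide by the Jacobian that converts the ``ring at central angle $\psi$'' measure into surface-area measure, namely the circumference-type factor $2\pi R_s^2 \sin\psi$. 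This yields the integrand $\dfrac{f_{\theta_1}(\psi(\theta,\phi))}{2\pi R_s^2 \sin\psi(\theta,\phi)} \cdot R_s^2 \sin\theta$, and integrating $\theta$ over $[0,\beta]$ and $\phi$ over $[0,2\pi]$ gives the claimed formula.

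The remaining ingredient is the explicit form of $\psi(\theta,\phi)$. For this I would apply the spherical law of cosines (or equivalently the Euclidean chord-length formula) to the triangle with vertices at the Earth's center, the receiver at $(R_e,\Theta,0)$, and the relay satellite at radius $R_s$ in direction $(\theta,\phi)$. Writing $\mathcal{D}^2(\theta,\phi,\Theta,0)$ for the squared Euclidean distance between the satellite and the receiver — which expands by the operator $\mathcal{D}^2$ defined in the statement — the cosine rule in the Earth-centered triangle gives $\mathcal{D}^2 = R_s^2 + R_e^2 - 2 R_s R_e \cos(\psi/2 \text{-type angle})$; solving for the central angle and accounting for the factor-of-two convention in the definition of $\psi$ produces $\psi(\theta,\phi) = 2\arccos\!\big((R_s^2 + R_e^2 - \mathcal{D}^2)/(2 R_s R_e)\big)$, matching the lemma.

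I expect the main obstacle to be justifying the reweighting factor rigorously — that is, explaining precisely why the selected relay's position, conditioned on it lying in a given thin ring at central angle $\psi$ from the receiver, inherits the density $f_{\theta_1}(\psi)$ normalized by the ring's surface area rather than some other measure. This is exactly the subtlety flagged in the paragraph preceding the lemma (``the probability of a satellite appearing in a circle ring with a fixed central angle $\theta_2$ is weighted by $f_{\theta_1}(\theta)$''), and it is also the source of the word ``approximate'' in the statement: the strongest-downlink-power relay is the one minimizing $d_{\mathrm{down}}$, so its $\theta_1$ is an order statistic rather than a single draw, and treating its angular density as $f_{\theta_1}$ (the single-satellite PDF from Lemma \ref{lemma1}) is an approximation whose scope I would state carefully. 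The angular bookkeeping for $\psi(\theta,\phi)$ and the change of variables are otherwise routine trigonometry and I would not belabor them.
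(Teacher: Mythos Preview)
Your proposal is correct and follows essentially the same route as the paper's proof: set up spherical coordinates $(\theta,\phi)$ centered at the transmitter, integrate over the cap $0\le\theta\le\beta$, $0\le\phi\le2\pi$ with area element $R_s^2\sin\theta\,\mathrm{d}\theta\,\mathrm{d}\phi$, and weight by the relay's surface density $f_{\theta_1}(\psi)/(2\pi R_s^2\sin\psi)$, with $\psi$ recovered from the law of cosines; the paper's appendix states exactly these three steps with no additional detail, so your write-up is in fact more complete than the original. One small correction: $f_{\theta_1}$ in Lemma~\ref{lemma1} is already the nearest-satellite (order-statistic) density, not a single-draw density, so the ``approximate'' label does not stem from the mismatch you describe; it comes rather from ignoring the visibility constraint in the relay selection (the ``reliable communication for both'' filter) and from the implicit assumption that, conditional on $\theta_1=\psi$, the relay is uniformly distributed on the ring at angle $\psi$.
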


Above all, we can obtain the uplink distance distribution $d_{\mathrm{up}}$ in the following lemma.
\begin{lemma}
The uplink distance distribution $d_{\mathrm{up}}$ is given by

\begin{equation}
\label{Fup}
\begin{split}
	F_{d_{\mathrm{up}}}(d_0)
 &=\left\{
	\begin{aligned}
            &0  , d_0 \leq R_s-R_e\\
            &\frac{d_0}{R_sR_s\sqrt{1-\left(\frac{R_s^2+R_e^2-d_0^2}{2(R_s+R_e)}\right)^2}}\\
            &\times F_{\theta_2} \left(\mathrm{arccos}\left(\frac{R_s^2+R_e^2-d_0^2}{2(R_s+R_e)}\right)\right),
            \mathrm{else}\\
		&1, d_0 \geq R_s+R_e\\	
	\end{aligned}
	\right.
 \end{split}
\end{equation}
\end{lemma}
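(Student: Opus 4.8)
The plan is to reduce the statement to a monotone change of variables between $d_{\mathrm{up}}$ and the transmitter-side central angle $\theta_2$ introduced before the lemma. Consider the triangle whose vertices are the Earth's center, the transmitter (at radial distance $R_e$) and the selected relay satellite (at radial distance $R_s$); its apex angle at the center is exactly $\theta_2$, so the law of cosines gives $d_{\mathrm{up}}^2=R_e^2+R_s^2-2R_eR_s\cos\theta_2$. Hence $d_{\mathrm{up}}=g(\theta_2)$ with $g(\theta)=\sqrt{R_e^2+R_s^2-2R_eR_s\cos\theta}$ strictly increasing on $[0,\pi]$ and mapping onto $[R_s-R_e,\,R_s+R_e]$; its inverse is $g^{-1}(d_0)=\arccos\frac{R_e^2+R_s^2-d_0^2}{2R_eR_s}$, which is the argument of $F_{\theta_2}$ appearing in the claim.

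First I would dispose of the two trivial pieces: if $d_0\le R_s-R_e$ the event $\{d_{\mathrm{up}}\le d_0\}$ is empty, since every point of the relay sphere is at distance at least $R_s-R_e$ from the transmitter, so $F_{d_{\mathrm{up}}}(d_0)=0$; symmetrically, if $d_0\ge R_s+R_e$ the event is almost sure and $F_{d_{\mathrm{up}}}(d_0)=1$. For $R_s-R_e<d_0<R_s+R_e$, monotonicity of $g$ gives $\{d_{\mathrm{up}}\le d_0\}=\{\theta_2\le g^{-1}(d_0)\}$, hence $F_{d_{\mathrm{up}}}(d_0)=F_{\theta_2}\!\big(g^{-1}(d_0)\big)$ with $F_{\theta_2}$ supplied by Lemma~\ref{lemma3}. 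Differentiating and applying the transformation-of-variables rule yields the density $f_{\theta_2}\!\big(g^{-1}(d_0)\big)\,\big|(g^{-1})'(d_0)\big|$, where the chain rule on $\arccos$ gives $(g^{-1})'(d_0)=\dfrac{d_0}{R_eR_s\sqrt{1-\left(\frac{R_e^2+R_s^2-d_0^2}{2R_eR_s}\right)^2}}$; this reproduces the prefactor displayed in~\eqref{Fup}, with $f_{\theta_2}=F_{\theta_2}'$ obtained by differentiating the expression of Lemma~\ref{lemma3}.

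The step requiring genuine care, rather than mechanical calculus, is the justification that under the proposed relay-selection rule the uplink distance is a function of $\theta_2$ alone. This is where the remark preceding Lemma~\ref{lemma3} is used: conditioning on $\theta_1$, which the selection rule effectively determines through the ``strongest average downlink power'' criterion, the relay lies uniformly on the corresponding ring, and the induced law of its transmitter-side central angle $\theta_2$ is precisely the $f_{\theta_1}$-weighted mixture integrated in Lemma~\ref{lemma3}. I expect this bookkeeping --- tracking which randomness is conditioned away, and checking that no residual dependence on the azimuth survives after the $\phi$-integration --- to be the main obstacle; once $F_{\theta_2}$ is accepted as the exact distribution of $\theta_2$, the rest is the boundary split and the single-variable change of variables above.
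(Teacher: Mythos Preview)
Your argument is correct and follows the same route as the paper: both proofs rest solely on the law-of-cosines bijection $d_{\mathrm{up}}=\sqrt{R_e^2+R_s^2-2R_eR_s\cos\theta_2}$ and the resulting one-variable change of variables, with Lemma~\ref{lemma3} supplying the distribution of $\theta_2$. Your derivation is in fact more careful than the paper's two-line proof --- you correctly obtain $F_{d_{\mathrm{up}}}(d_0)=F_{\theta_2}\big(g^{-1}(d_0)\big)$ for the CDF and identify the displayed prefactor as the Jacobian $(g^{-1})'(d_0)$ belonging to the \emph{density}; the printed formula~\eqref{Fup} conflates the two (and carries typos such as $R_sR_s$ and $2(R_s+R_e)$ in place of $R_eR_s$ and $2R_eR_s$), so your reading is the right one.
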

\begin{proof}
Since the expressions for the central angle $\theta_2$ and downlink distance $d_\mathrm{up}$ is given by
\begin{equation}
\begin{split}
d_{\mathrm{up}} = \sqrt{R_s^2+R_e^2-2R_sR_e\cos\theta_2}\\
\theta_2 = \mathrm{arccos}\left(\frac{R_s^2+R_e^2-d_{\mathrm{up}}^2}{2R_sR_e}\right),
\end{split}
\end{equation}
and the uplink distance distribution $d_{\mathrm{up}}$ is given by (\ref{Fup}).

\end{proof}

\subsection{Time Delay}
We define the transmission time delay of a link as
\begin{equation}
\tau_{Q} = \frac{M}{B\log_2({1+\mathrm{SNR}_{Q}})},Q\in \{\mathrm{up},\mathrm{down}\},
\end{equation}
where $M$ and $B$ respectively denote the size of the packet and the bandwidth of transmission. Thus we can derive the total time delay $\tau_{\mathrm{total}}$ in following theorem.


\begin{theorem}\label{theorem1}
The average time delay of downlink $\overline{\tau}_{\mathrm{down}} $ is given by
\begin{equation}
\overline{\tau}_{\mathrm{down}}=\frac{M}{B\log_2\left({1+\frac{p_{(1)}^sG_{(1)}^s G_{(2)}^e \lambda_d^2(\Omega+2b_0))}{(4 \pi \overline{d}_{\mathrm{down}})^2L_{(1)}^s L_{(2)}^e L_{\mathrm{add}}N_d}}\right)}.
\end{equation}

\begin{proof}
See Appendix~\ref{app:theorem1}.
\end{proof}
\end{theorem}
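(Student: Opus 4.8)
The plan is to read $\overline{\tau}_{\mathrm{down}}$ as the expectation of the link delay $\tau_{\mathrm{down}}$ over the two sources of randomness appearing in \eqref{5}: the shadowed-Rician fading power $W_t^2$ and the downlink distance $d_{\mathrm{down}}$. These are independent, since $d_{\mathrm{down}}$ is determined by the geometry of the relay chosen by the selection rule (a bijective function of $\theta_1$, whose law is given by Lemma~\ref{lemma2}) while $W_t^2$ is the small-scale fading. Since $\tau_{\mathrm{down}}$ is a smooth decreasing function of $\mathrm{SNR}_{\mathrm{down}}$, I would replace the random SNR by the deterministic value obtained by substituting the mean fading power $\mathbb{E}[W_t^2]$ and the mean distance $\overline{d}_{\mathrm{down}}$ into \eqref{5}, i.e.\ adopt the mean-value approximation
\[
\overline{\tau}_{\mathrm{down}}\approx\frac{M}{B\log_2\!\big(1+\widehat{\mathrm{SNR}}_{\mathrm{down}}\big)},\qquad
\widehat{\mathrm{SNR}}_{\mathrm{down}}=\frac{p_{(1)}^sG_{(1)}^s G_{(2)}^e \lambda_d^2\,\mathbb{E}[W_t^2]}{(4\pi\,\overline{d}_{\mathrm{down}})^2 L_{(1)}^s L_{(2)}^e L_{\mathrm{add}}N_d}.
\]
The remaining work is to evaluate the two moments $\mathbb{E}[W_t^2]$ and $\overline{d}_{\mathrm{down}}=\mathbb{E}[d_{\mathrm{down}}]$ in closed form and substitute.

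For $\mathbb{E}[W_t^2]$ I would integrate the PDF in (3) termwise, using $\int_0^\infty t^{\,n+1}e^{-t/2b_0}\,\mathrm{d}t=(2b_0)^{n+2}(n+1)!$ and $(1)_n=n!$, and then resum with the Pochhammer identity $\sum_{n\ge0}\frac{(m)_n}{n!}(n+1)x^n=(1-x)^{-m-1}\big(1+(m-1)x\big)$ evaluated at $x=\Omega/(2b_0m+\Omega)$; the prefactor $\big(2b_0m/(2b_0m+\Omega)\big)^m$ then cancels everything except $m(2b_0+\Omega)/(2b_0m)$, leaving $\mathbb{E}[W_t^2]=\Omega+2b_0$, exactly the factor in the statement. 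For $\overline{d}_{\mathrm{down}}$ I would use Lemma~\ref{lemma2}: since $d_{\mathrm{down}}\in[R_s-R_e,R_s+R_e]$,
\[
\overline{d}_{\mathrm{down}}=(R_s-R_e)+\int_{R_s-R_e}^{R_s+R_e}\!\big(1-F_{d_{\mathrm{down}}}(d_0)\big)\,\mathrm{d}d_0
=(R_s-R_e)+\int_{R_s-R_e}^{R_s+R_e}\!\left(1-\frac{d_0^2-(R_s-R_e)^2}{4R_eR_s}\right)^{N_s}\!\mathrm{d}d_0,
\]
and the binomial expansion in $d_0^2$ integrates term by term, producing the value denoted $\overline{d}_{\mathrm{down}}$. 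Inserting $\mathbb{E}[W_t^2]=\Omega+2b_0$ and this $\overline{d}_{\mathrm{down}}$ into the displayed approximation yields the claimed formula.

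The real obstacle is the swap of expectation with the nonlinear delay map, not the algebra. The function $h(x)=1/\log_2(1+x)$ is convex on $(0,\infty)$, so by Jensen $h(\mathbb{E}[\mathrm{SNR}_{\mathrm{down}}])\le\mathbb{E}[h(\mathrm{SNR}_{\mathrm{down}})]$, while replacing $\mathbb{E}[\mathrm{SNR}_{\mathrm{down}}]$ — which carries $\mathbb{E}[1/d_{\mathrm{down}}^2]$ rather than $1/\overline{d}_{\mathrm{down}}^2$ — by its value at the mean distance perturbs the estimate in the opposite direction, so the displayed expression is a genuine approximation whose error is not sign-definite. A fully rigorous treatment would bound these two gaps, e.g.\ via a second-order Taylor expansion of $h$ about $\mathbb{E}[\mathrm{SNR}_{\mathrm{down}}]$ with remainder controlled by $h''$ and by the variances of $W_t^2$ and $1/d_{\mathrm{down}}^2$; in the paper this is instead deferred to the numerical validation. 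The resummation for $\mathbb{E}[W_t^2]$ is the only other delicate point, and it is mechanical once the hypergeometric identity is invoked.
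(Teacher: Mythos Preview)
Your overall strategy---replace the random $\mathrm{SNR}_{\mathrm{down}}$ in \eqref{5} by a deterministic surrogate built from $\mathbb{E}[W_t^2]$ and a representative distance, then substitute into the delay formula---is exactly what the paper does. The two proofs differ only in how the two moments are obtained. For $\mathbb{E}[W_t^2]$ you integrate the PDF termwise and resum via the Pochhammer identity; the paper instead differentiates the moment-generating function of $W_t^2$ at the origin. Both are correct and yield $\Omega+2b_0$; your route is more hands-on, the MGF route is a one-liner once the MGF is quoted. For $\overline{d}_{\mathrm{down}}$ there is a genuine discrepancy in \emph{definition}: you take $\overline{d}_{\mathrm{down}}=\mathbb{E}[d_{\mathrm{down}}]$ and evaluate it from the survival function in Lemma~\ref{lemma2}, whereas the paper defines $\overline{d}_{\mathrm{down}}=\sqrt{R_e^2+R_s^2-2R_eR_s\cos\overline{\theta}_1}$ with $\overline{\theta}_1=\mathbb{E}[\theta_1]$, and computes the latter via the Wallis-type integral $\int_0^{\pi}\cos^{2N_s}(\theta/2)\,\mathrm{d}\theta=\pi\prod_{n=0}^{N_s-1}\frac{2N_s-2n-1}{2N_s-2n}$. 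These are two different surrogates for the ``typical'' downlink distance (distance at mean angle versus mean distance); neither is more principled than the other at the level of approximation already in play, and both are consistent with the displayed theorem since $\overline{d}_{\mathrm{down}}$ is left undefined in the statement. Your Jensen/second-order discussion of the approximation error is more explicit than anything in the paper, which simply validates the formula numerically.
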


\begin{theorem}\label{theorem2}
The average time delay of uplink $\overline{\tau}_{\mathrm{up}} $ is given by
\begin{equation}
\overline{\tau}_{\mathrm{up}} = \int_{0}^{\infty} \frac{M}{B\log_2({1+\gamma})} f_{{\mathrm{SNR}}_\mathrm{up}}(\gamma) \mathrm{d}\gamma,
\end{equation}
where 
\begin{equation}
\begin{split}
    f_{{\mathrm{SNR}}_\mathrm{up}}(\gamma) &= \int_{(R_s-R_e)^2}^{(R_s+R_e)^2} \frac{\sqrt{u}}{2} \left(\frac{2b_0m}{2b_0m+\Omega}\right)^m  \exp \left(-\frac{z(\gamma)u}{2b_0}\right)\\
    & \times \frac{1}{2b_0} \sum\limits_{n=0}^{\infty }\frac{(m)_n}{(1)_n n!}{{ \left(\frac{\Omega \,z(\gamma)u}{2b_0(2b_0m+\Omega )} \right)}^{n}} f_{d_{\mathrm{up}}}(\sqrt{u})\mathrm{d}u,
\end{split}
\end{equation}
and $ z(\gamma) = \frac{(4 \pi)^2L_{(1)}^e L_{(2)}^s L_{\mathrm{add}}N_u\gamma}{p_{(1)}^eG_{(1)}^e G_{(2)}^s \lambda_u^2}$.
\begin{proof}
See Appendix~\ref{app:theorem2}.
\end{proof}
\end{theorem}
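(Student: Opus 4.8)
The plan is to compute the density of $\mathrm{SNR}_{\mathrm{up}}$ explicitly from its definition in~\eqref{4} and then substitute it into the expectation $\overline{\tau}_{\mathrm{up}} = \mathbb{E}[\tau_{\mathrm{up}}]$, where $\tau_{\mathrm{up}} = M/(B\log_2(1+\mathrm{SNR}_{\mathrm{up}}))$. First I would note that, conditioned on the relay satellite position (equivalently, on $d_{\mathrm{up}}$), the only remaining randomness in $\mathrm{SNR}_{\mathrm{up}}$ is the SR fading power $W_t^2$, and from~\eqref{4} we have the affine relation $\mathrm{SNR}_{\mathrm{up}} = c\, W_t^2 / d_{\mathrm{up}}^2$ with $c = p_{(1)}^e G_{(1)}^e G_{(2)}^s \lambda_u^2 / ((4\pi)^2 L_{(1)}^e L_{(2)}^s L_{\mathrm{add}} N_u)$. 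Inverting this for a target value $\gamma$ gives $W_t^2 = \gamma d_{\mathrm{up}}^2 / c = z(\gamma)\, d_{\mathrm{up}}^2$, which is exactly the $z(\gamma)$ defined in the statement; the Jacobian of this change of variables contributes the $d_{\mathrm{up}}^2$ factor.

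Next I would write the joint density by conditioning: $f_{\mathrm{SNR}_{\mathrm{up}}}(\gamma) = \int f_{W_t^2}(z(\gamma) u)\, \frac{\partial}{\partial \gamma}(z(\gamma) u)\, \cdot \tfrac{1}{?}\, f_{d_{\mathrm{up}}^2}(u)\, \mathrm{d}u$ over the support $u \in [(R_s-R_e)^2, (R_s+R_e)^2]$ of $d_{\mathrm{up}}^2$. Substituting the SR density~(3) for $f_{W_t^2}$ at argument $z(\gamma)u$ reproduces the $\left(\frac{2b_0 m}{2b_0 m + \Omega}\right)^m$ prefactor, the $\exp(-z(\gamma)u/(2b_0))$ term, and the Pochhammer series $\sum_n \frac{(m)_n}{(1)_n n!}\left(\frac{\Omega z(\gamma) u}{2b_0(2b_0 m + \Omega)}\right)^n$; the remaining $\frac{\sqrt{u}}{2}\,\frac{1}{2b_0}$ factor comes from combining the $1/(2b_0)$ in~(3) with the derivative/Jacobian terms and re-expressing $f_{d_{\mathrm{up}}^2}(u)$ in terms of $f_{d_{\mathrm{up}}}(\sqrt{u})$ via $f_{d_{\mathrm{up}}^2}(u) = f_{d_{\mathrm{up}}}(\sqrt{u})/(2\sqrt{u})$, which one must track carefully to land on exactly the stated $\tfrac{\sqrt u}{2}$ coefficient. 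Finally, plugging $f_{\mathrm{SNR}_{\mathrm{up}}}$ into $\overline{\tau}_{\mathrm{up}} = \int_0^\infty \frac{M}{B\log_2(1+\gamma)} f_{\mathrm{SNR}_{\mathrm{up}}}(\gamma)\,\mathrm{d}\gamma$ gives the claimed expression.

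The main obstacle I expect is bookkeeping the chain of changes of variables cleanly — from $W_t^2$ to $\mathrm{SNR}_{\mathrm{up}}$ at fixed $d_{\mathrm{up}}$, and from $d_{\mathrm{up}}$ to $u = d_{\mathrm{up}}^2$ — so that all the $2$'s, $\sqrt{u}$'s, and $1/(2b_0)$'s combine into precisely the coefficient $\tfrac{\sqrt u}{2}\cdot\tfrac{1}{2b_0}$ shown, and confirming that the $d_{\mathrm{up}}^2$ absorbed into $z(\gamma)u$ is consistent with the power-law dependence $\mathrm{SNR}_{\mathrm{up}} \propto d_{\mathrm{up}}^{-2}$ in~\eqref{4}. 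A secondary point is justifying that the integration over $u$ and the infinite Pochhammer sum may be interchanged, which follows from nonnegativity of all terms (Tonelli) so no delicate convergence argument is needed. Once the density is pinned down, substitution into the delay expectation is immediate and requires no further computation, so I would present that last step as a one-line conclusion.
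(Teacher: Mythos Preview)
Your proposal is correct and follows essentially the same route as the paper's own proof: both condition on $d_{\mathrm{up}}^2=u$, use the linear relation $W_t^2=z(\gamma)u$ from~\eqref{4} together with the change of variables $f_{d_{\mathrm{up}}^2}(u)=f_{d_{\mathrm{up}}}(\sqrt{u})/(2\sqrt{u})$ to obtain $f_{\mathrm{SNR}_{\mathrm{up}}}(\gamma)$, substitute the SR density, and then plug into $\overline{\tau}_{\mathrm{up}}=\mathbb{E}[M/(B\log_2(1+\mathrm{SNR}_{\mathrm{up}}))]$. Your extra remarks on the Jacobian bookkeeping and the Tonelli justification for interchanging the Pochhammer sum with the $u$-integral are more careful than the paper, which simply writes down the chain of equalities without comment.
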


\subsection{Optimality Analysis}
Due to the relay selection strategy and earth blockage, the transmitter and receiver can only communicate with satellites within a maximum distance $L_{\mathrm{max}}$:
\begin{equation}
\label{Lmax}
L_{\mathrm{max}} = 2R_e\sin\left(\frac{1}{2}\arccos\left(\frac{R_e}{R_s}\right)\right).
\end{equation}

Under the premise of satisfying the above inequalities, we solve the optimization problem below and explore the optimal relay position of the satellite through numerical results.
\begin{equation} 
 \theta^* = \underset{0 \leq \theta \leq \Theta}{\mathrm{argmin}} \ \overline{\tau}_{\mathrm{up}} + \overline{\tau}_{\mathrm{down}}.
\end{equation}

\begin{figure}[H]
  \centering
  \includegraphics[width=0.7\linewidth]{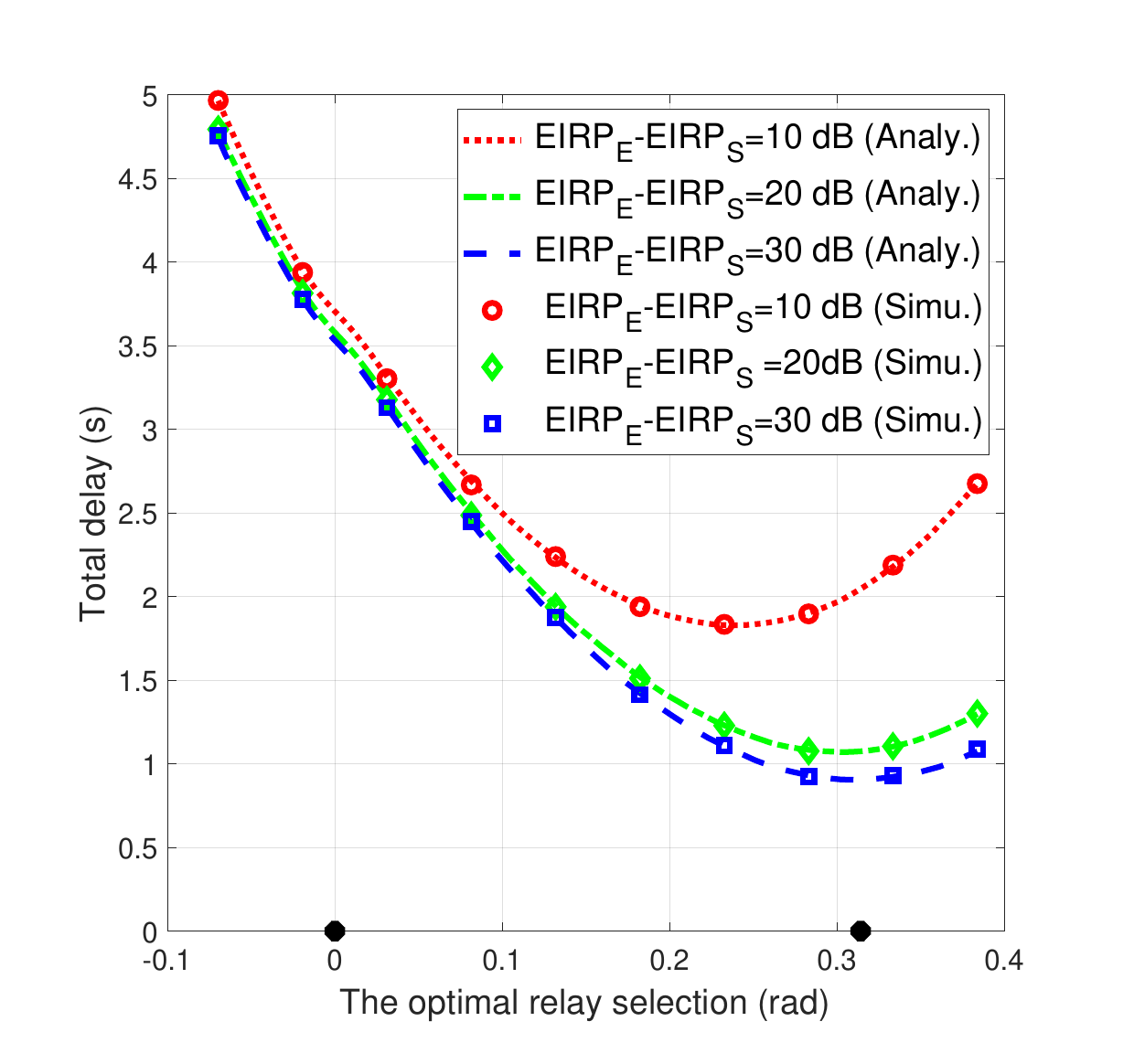}
  \caption{Total time delay with different relay selection.}
  \label{simufigure1}
\end{figure}

In Fig.\ref{simufigure1}, we use markers to denote the polar angle coordinate position of the terrestrial station. The horizontal coordinate is the polar angle of the ideal relay satellite. In the Monte-Carlo simulation, we search for the closest satellite to the ideal satellite coordinates as a relay satellite to calculate the uplink and downlink time delays. It can be seen that choosing the closest satellite to the receiver is a good suboptimal strategy when the ratio of uplink and downlink power differs significantly.

\section{Numerical Results}

\begin{table*}[htbp]
  \begin{center}
    \caption{Table of Notations.}
    \label{table_1}
    \begin{tabular}{c|c|c}
    \hline
      \textbf{\textit{Notations}}&\textbf{\textit{Description}}&\textbf{\textit{Value(Default)}}\\
      \hline
      $N_s$ & Number of satellites & 500\\ \hline
      $R_e$;\,$R_s$ & Radius of the Earth; satellites & 6371; 6871 (km)\\ \hline
      $N_u$;\,$N_d$ & Noise power of uplink; downlink & $3.6\times10^{-12}$; $3.6\times10^{-12}$\\ \hline
      $B_u$;\,$B_d$ & Bandwidth of uplink; downlink & $0.5$; $0.25$ (GHz)\\ \hline
      $p_{(1)}^sG_{(1)}^s$;\,$p_{(1)}^eG_{(1)}^e$ & Effective isotropic radiated power at relay satellite; transmitter & 30; 60 (dB)\\\hline
       $L_{\mathrm{add}}$ & Link additional loss & 3 (dB)\\\hline
      $\lambda_d$;\,$\lambda_u$ & Carrier wavelength of downlink; uplink & 0.0231; 0.015 (m)\\\hline
      $\Omega$;\,$b_0$;\,$m$ & Line-of-sight component; scatter component; Nakagami parameter & 1.29; 0.158; 19.4\\\hline
      $M$ & The size of the packet & 0.5 Gbit\\\hline
      \hline
    \end{tabular}
  \end{center}
\end{table*}

In this section, we verify the accurancy of the derived expressions using Monte-Carlo simulations. In addition, we study the influence of various system parameters on the performance of the considered system. In all the figures, markers represent the derived analytical results while the solid lines represent the Monte-Carlo simulations. The system parameters used in the simulations are summarized in Table \ref{table_1}.
\begin{figure}[t]
  \centering
  \includegraphics[width=0.65\linewidth]{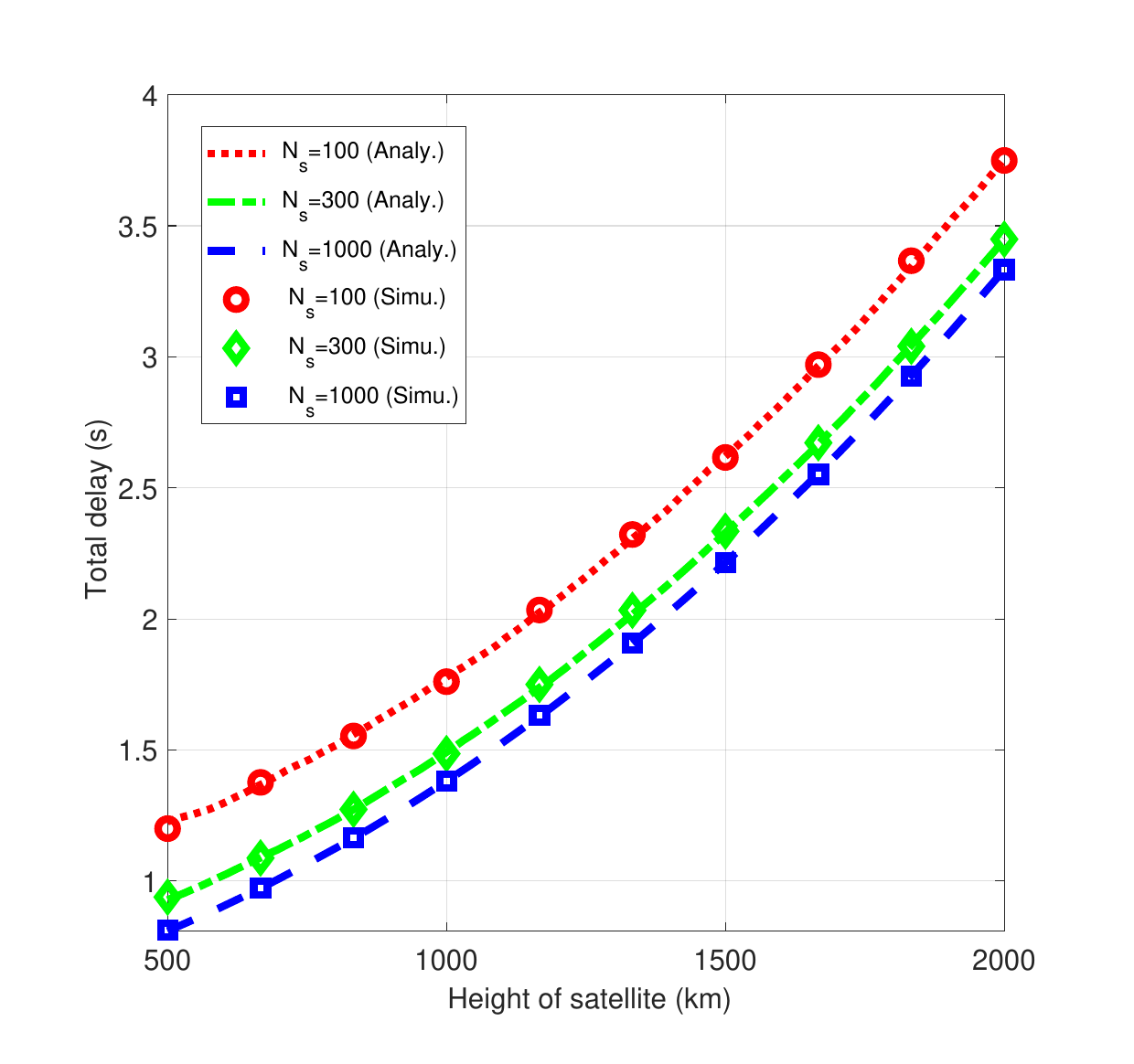}
  \caption{Total time delay with different number of satellite.}
  \label{simufigure2}
\end{figure}

\begin{figure}[t]
  \centering
  \includegraphics[width=0.65\linewidth]{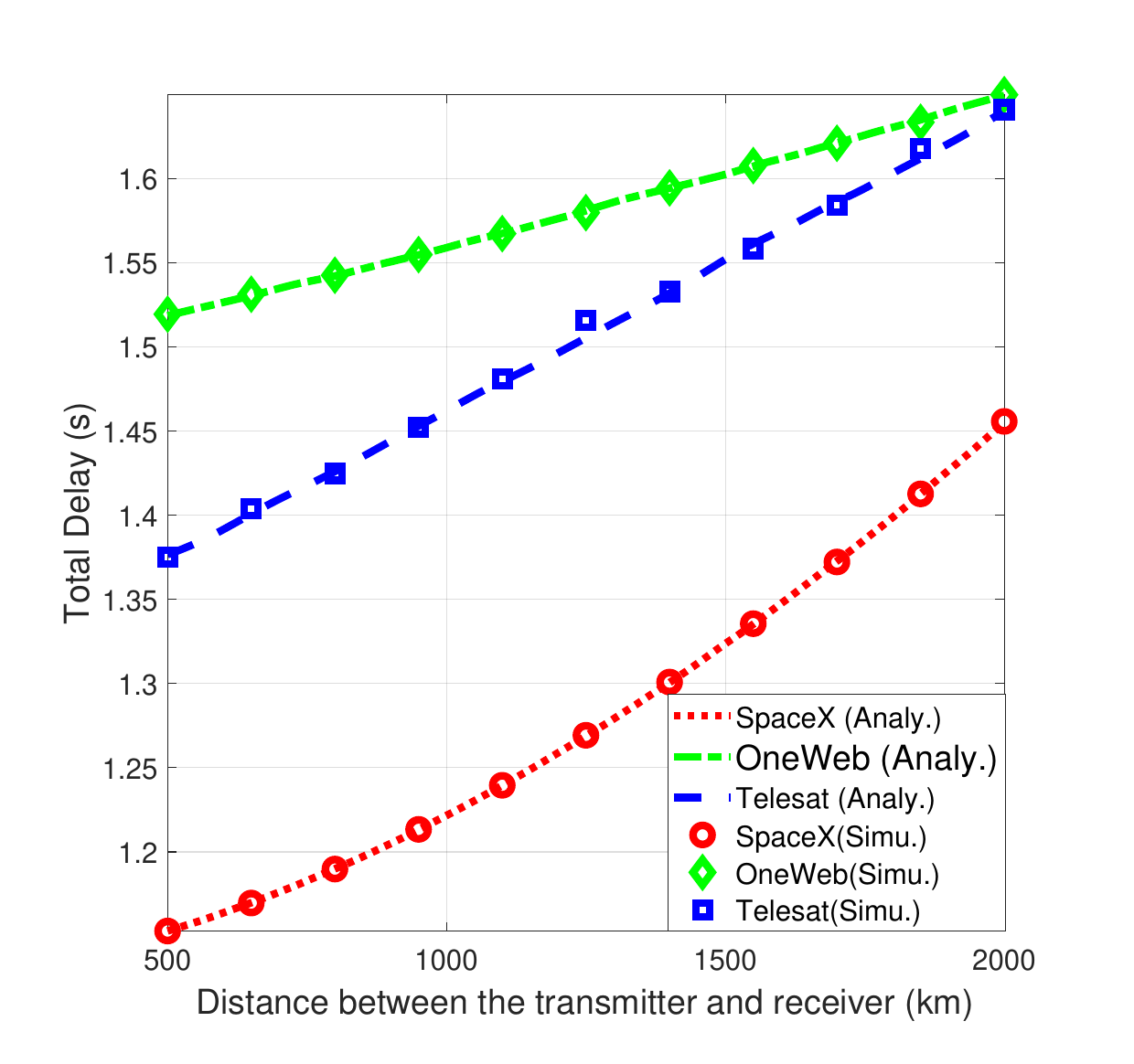}
  \caption{Total time delay with different distance between the transmitter and receiver.}
  \label{simufigure3}
\end{figure}

\begin{figure}[t]
  \centering
  \includegraphics[width=0.65\linewidth]{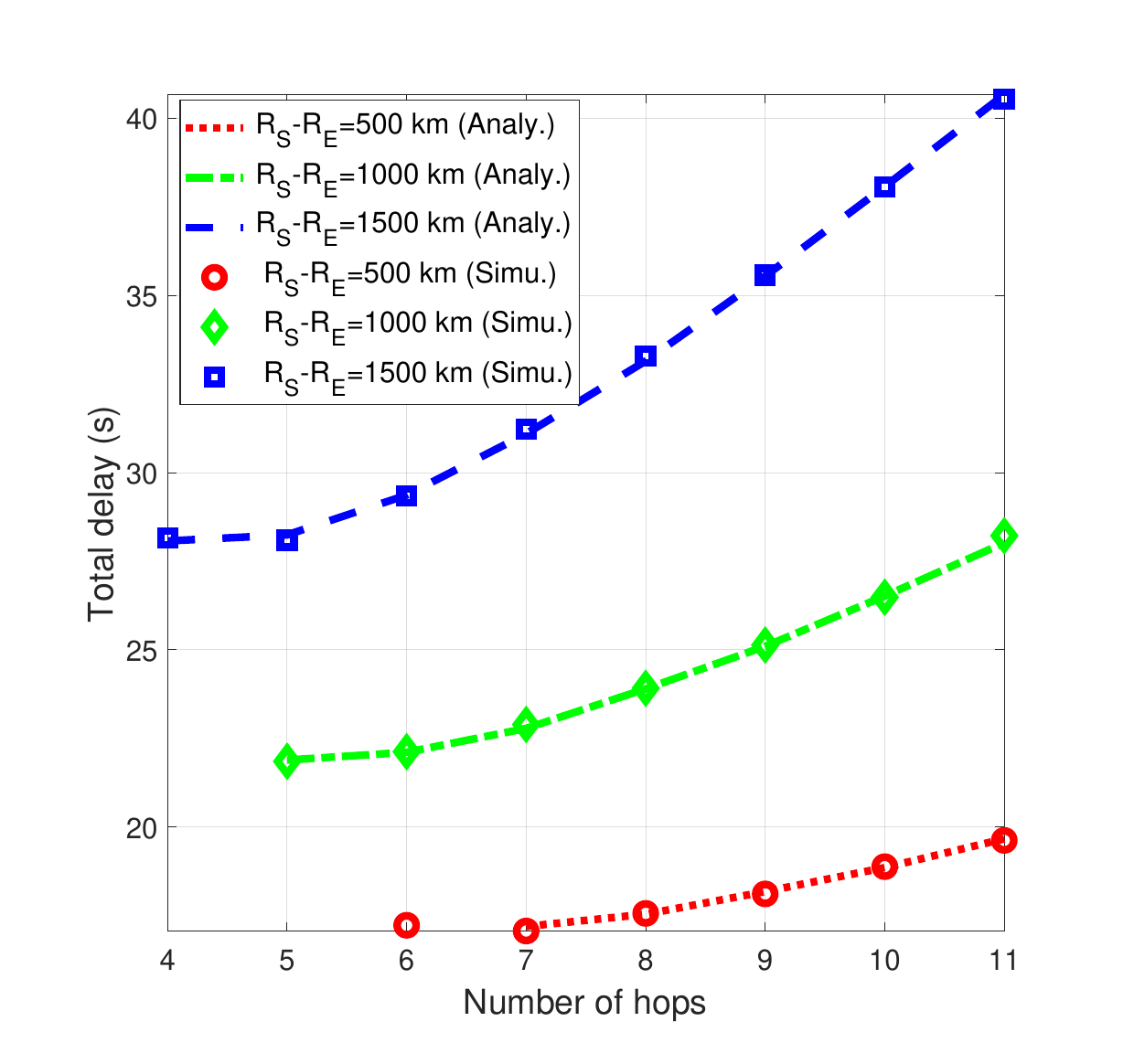}
  \caption{Total time delay with different hops in fixed long-distance transmission.}
  \label{simufigure4}
\end{figure}

In Fig.\ref{simufigure2}, we plot total delay for different fixed number of satellites and study the effect of increasing the altitude of satellite. The results show that transmission delay increase as the the height of satellite and we can observe that for a fixed altitude time delay reduces as we increase the number of satellite.

In Fig.\ref{simufigure3}, the time delay is studied under different values for the distance between the transmitter and receiver for companies OneWeb, Telesat and SpaceX with altitudes $h = 1200$ km, $h = 1150$ km and $h = 1110$  km \cite{del2019technical}. Other parameters such as the number of satellites, effective isotropic radiated power, carrier frequency, bandwidth, etc. are referenced in \cite{del2019technical}. 

Fig.\ref{simufigure4} illustrates the scenario of a long distance transmission containing multiple hops. We set the total transmission distance to 15000 km and calculate the minimum number of hops required at different altitude satellites according to (\ref{Lmax}). As the altitude of the satellite increases, it is less affected by ground obscuration and the maximum distance between both sides of communication on the ground is increasing.

\section{Conclusion}
In this work, we propose a suboptimal satellite relay selection strategy in terrestrial-satellite-terrestrial scenario. Though deriving theoretical expressions of the downlink and uplink distance distribution, we give a expression of total transmission delay. We have verified all the derived expressions using Monte-Carlo simulations and ensured perfect fit. In simulation, we explore the conditions for the proposed strategy to reach optimal and provide the numerical results about the influence of the altitudes of the satellites, their numbers, the distance between two terrestrial stations on the performance of the delay.

\section{Acknowledgement}
This work was supported by the National Key Research and Development Program of China (No. 2021YFB2900404).

\appendices
\section{Proof of Lemma~\ref{lemma1}}\label{app:lemma1}
For a homogeneous BPP, the probability of the satellite locates in a spherical cap with central angle $\theta$ is equal to the ratio of the area of $\theta$ to the total surface area of the sphere. So, we can obtain 
\begin{equation}
\begin{split}
    \mathbb{P}\left[  \theta_1 \leq \theta\right] &= \frac{\int_0^{2\pi} \int_0^{\theta} R_s sin(\theta_1) \mathrm{d}\theta_1 \mathrm{d}\phi}{4\pi R_s^2}\\
    &= \frac{ 2\pi R_s^2 (1-\cos\theta)}{4\pi R_s^2},0\leq \theta \leq \pi. \\
\end{split}
\end{equation}

Due to the channel assignment by which the serving satellite is the nearest one among all the $N_s$ i.i.d. satellites, the CDF of the $\theta_1$ from any specific one of the satellites in the constellation to the receiver is given by 
\begin{equation}
\begin{split}
    F_{\theta_1}(\theta) 
    & = 1 - \prod_{i=1}^{N_s} \mathbb{P}\left[  \theta_1 \geq \theta\right] \\
    & = 1 - \left( 1 - \frac{ 2\pi R_s^2 (1-\cos\theta)}{4\pi R_s^2} \right)^{N_s}\\
    & = 1 - \left( \frac{ 1 + \cos\theta }{2} \right)^{N_s},\\
\end{split}
\end{equation}
The PDF of $\theta_1$ is
\begin{equation}\label{fpsinn}
\begin{split}
    f_{\theta_1}(\theta) = \frac{\mathrm{d}}{\mathrm{d}\theta} F_{\theta_1}(\theta) = \frac{N_s \sin\theta}{2} \left( \frac{ 1 + \cos\theta }{2} \right)^{N_s-1},
\end{split}
\end{equation}
where $ \frac{\mathrm{d}}{\mathrm{d}\theta}$ means take the derivative with respect to $\theta$.        

\section{Proof of Lemma~\ref{lemma2}}\label{app:lemma2}

In the authors’ work \cite{9079921}, the PDF of $d_{\mathrm{down}}$ was derived as shown in the below lemma. 

Due to the channel assignment by which the serving satellite is the nearest one among all the $N_s$ i.i.d. satellites, the PDF of the $d_{\mathrm{down}}$ from any specific one of the satellites in the constellation to the user is given by
\begin{equation}
\label{fdown}
f_{d_{\mathrm{down}}}(d_0) = N_{s} \left( 1- \frac{d_0^2-(R_s-R_e)^2}{4R_eR_s}\right)^{ N_{s}-1 } \frac{d_0}{2R_eR_s},
\end{equation}
for $R_s-R_e\leq d_0 \leq R_s+R_e$ while $f_{d_{\mathrm{down}}}(d_0)=0$ otherwise.
The CDF can be expressed as 
\begin{equation}
\begin{split}
	F_{d_{\mathrm{down}}}(d_0)
 &=\left\{
	\begin{aligned}
            &0  , d_0 \leq R_s-R_e\\
            & 1-\left(1-\left( \frac{d_0^2-(R_s-R_e)^2}{4R_eR_s} \right)\right)^{ N_{s} }, 
            \mathrm{else}\\
		&1  ,d_0 \geq R_s+R_e\\	
	\end{aligned}
	\right.
 \end{split}
\end{equation}

\section{Proof of Lemma~\ref{lemma3}}\label{app:lemma3}

\begin{figure}[t]
  \centering
  \includegraphics[width=\linewidth]{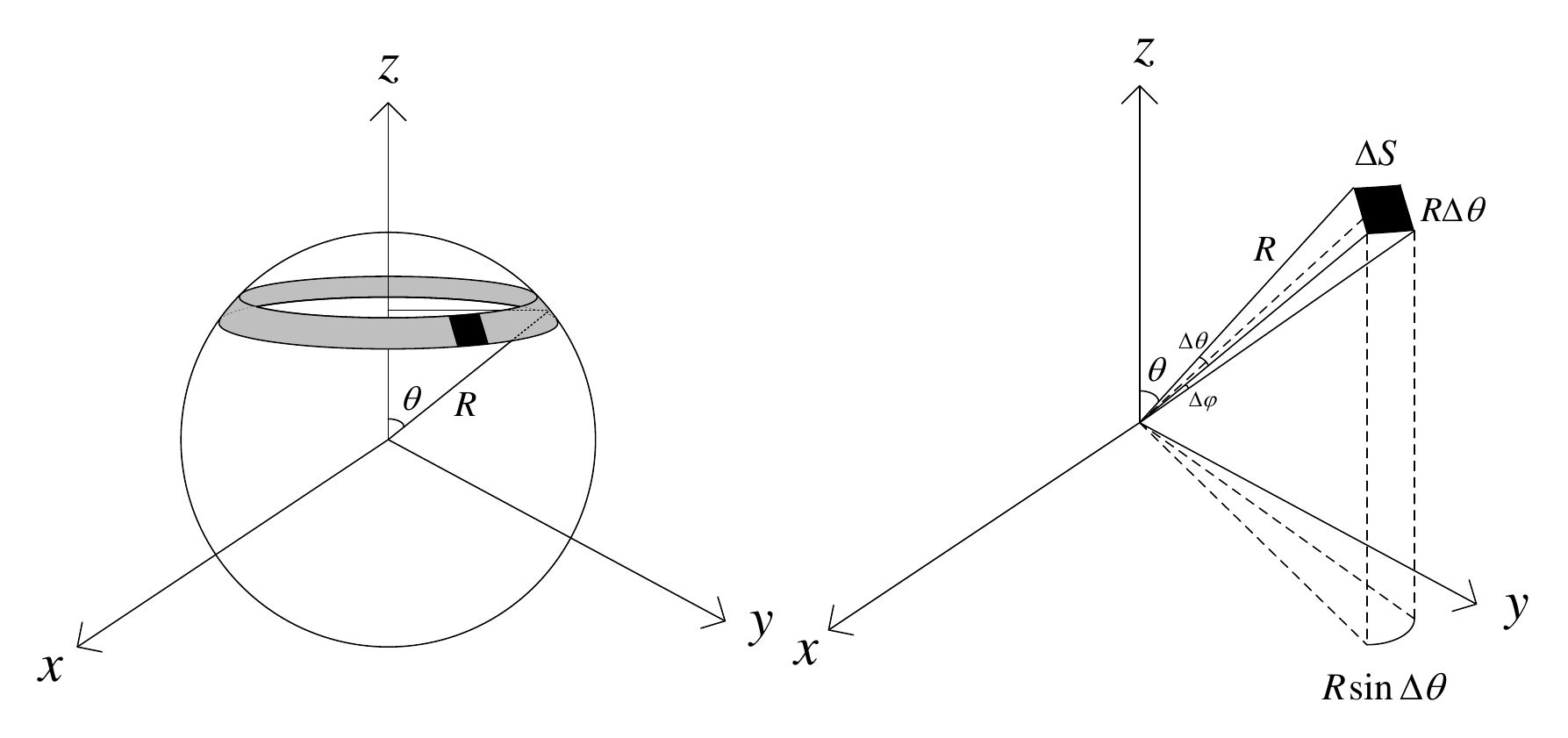}
  \caption{Illustrations of Lemma 3 and Lemma 4.}
  \label{figure2}
\end{figure}

To derive the distribution of $\theta_2$, the following steps are taken: (i): assuming that the coordinate position of the satellite is $(R_s,\theta_0,\phi_0)$; (ii): in order to calculate the probability of the satellite appearing in the spherical cap (corresponds to the central angle $\beta$), perform a double integration over the probability of the satellite with respect to $\theta_0$ and $\phi_0$, where $0 \leq \theta_0 \leq \beta$ and $0 \leq \phi_0 \leq 2\pi$; (iii): the probability of occurrence at $(R_s,\theta_0,\phi_0)$ is weighted by $\frac{f_{\theta_1}\left( \psi(\theta,\phi) \right)}{2\pi R_s^2 \sin\psi(\theta,\phi)}$.

Given that the maximum central angle of the transmitter’s spherical cap is $2\beta$, the approximate CDF of $ F_{\theta_2}(\beta)$ is given by
\begin{equation}
    \begin{split}
    F_{\theta_2} \left(\beta\right) = \int_0^{2\pi} \int_0^{\beta} \frac{f_{\theta_1}\left( \psi(\theta,\phi) \right)}{2\pi R_s^2 \sin\psi(\theta,\phi)}  R_s^2 \sin\theta \mathrm{d}\theta \mathrm{d}\phi.
\end{split}
\end{equation}
And $\psi(\theta,\phi)$ is represented by
\begin{equation}
\psi(\theta,\phi) = 2\arccos \left(\sin\theta\sin\Theta\cos\phi+\cos\theta\cos\Theta\right),
\end{equation}
where $\Theta$ is the central angle between the transmitter and receiver.

\section{Proof of Theorem~\ref{theorem1}}\label{app:theorem1}

The average time delay of uplink $\overline{\tau}_{\mathrm{down}}$ is given by
\begin{equation}
\label{27}
\overline{\tau}_{\mathrm{down}} = \frac{M}{B\log_2({1+\overline{\mathrm{SNR}}_{\mathrm{down}}})},
\end{equation}
where $\mathbb{E}[\cdot]$ denotes taking the mathematical expectation.

The average central angle $\overline{\theta}_1$ is given by:
\begin{equation}
\begin{split}
\label{avertheta1}
\overline{\theta}_1 &= \mathbb{E}[\theta_1] = \int_{0}^{\pi}1-F_{\theta_1}(\theta) \mathrm{d}\theta\\
&=\int_{0}^{\pi}\left( \frac{ 1 + \cos\theta }{2} \right)^{N_s}\mathrm{d}\theta =\int_{0}^{\pi} (\cos\frac{\theta}{2})^{2N_s} \mathrm{d}\theta\\
&=2\int_{0}^{\frac{\pi}{2}} (\cos\theta)^{2N_s}\mathrm{d}\theta=\pi\prod_{n=0}^{N_s-1}\frac{2N_s-2n-1}{2N_s-2n}
\end{split}
\end{equation}

The average distance of downlink $\overline{d}_{\mathrm{down}}$ can be calculated by:
\begin{equation}
\label{averdowndis}
\overline{d}_{\mathrm{down}} = \sqrt{R_e^2+R_s^2-2R_eR_s\cos\overline{\theta}_1}.
\end{equation}

According to \cite{1198102}, the moment-generating function (MGF) of the instantaneous power can be shown to be
\begin{equation}
M_{W_t^2}(\sigma )= \frac{(2b_0m)^m(1+2b_0\sigma)^{m-1}}{[(2b_0m+\Omega)(1+2b_0\sigma)-\Omega]^m},\sigma \geq 0.
\end{equation}

The mathematical expectation of $W_t^2$ is given by:
\begin{equation}
\label{SRexp}
\begin{split}
\mathbb{E}[W_t^2] = \frac{\mathrm{d}}{\mathrm{d}\sigma}M_{W_t^2}(0) = \Omega+2b_0.
\end{split}
\end{equation}

Recall (\ref{5}), by substituting (\ref{avertheta1}, (\ref{averdowndis}) and (\ref{SRexp}) into (\ref{27}), we obtain the average time delay of downlink $\overline{\tau}_{\mathrm{down}}$:
\begin{equation}
\overline{\tau}_{\mathrm{down}}=\frac{M}{B\log_2\left({1+\frac{p_{(1)}^sG_{(1)}^s G_{(2)}^e \lambda_d^2(\Omega+2b_0))}{(4 \pi \overline{d}_{\mathrm{down}})^2L_{(1)}^s L_{(2)}^e L_{\mathrm{add}}N_d}}\right)}.
\end{equation}

\section{Proof of Theorem~\ref{theorem2}}\label{app:theorem2}
Recall (\ref{4}), since $W_t^2$ and $d_{\mathrm{down}}$ are independent random variables, the PDF of ${\mathrm{SNR}}_\mathrm{up}$ is a two-dimensional random variable and can be given by
\begin{equation}
\begin{split}
&f_{{\mathrm{SNR}}_\mathrm{up}}(\gamma)\\
&= \mathbb{P}\left(\frac{p_{(1)}^eG_{(1)}^e G_{(2)}^s \lambda_u^2W_t^2}{(4 \pi d_{\mathrm{up}})^2L_{(1)}^e L_{(2)}^s L_{\mathrm{add}}N_u} = \gamma \right)\\
&= \int_{(R_s-R_e)^2}^{(R_s+R_e)^2} u f_{W_t^2}\left(z(\gamma)u\right)f_{d_{\mathrm{down}}^2}(u)\mathrm{d}u\ \\
&= \int_{(R_s-R_e)^2}^{(R_s+R_e)^2} \frac{\sqrt{u}}{2}f_{W_t^2}\left(z(\gamma)u\right)f_{d_{\mathrm{down}}}(\sqrt{u})\mathrm{d}u\\
&= \int_{(R_s-R_e)^2}^{(R_s+R_e)^2} \frac{\sqrt{u}}{2} \left(\frac{2b_0m}{2b_0m+\Omega}\right)^m \frac{1}{2b_0} \exp \left(-\frac{z(\gamma)u}{2b_0}\right)\\
&\times \sum\limits_{n=0}^{\infty }\frac{(m)_n}{(1)_n n!}{{ \left(\frac{\Omega \,z(\gamma)u}{2b_0(2b_0m+\Omega )} \right)}^{n}} f_{d_{\mathrm{up}}}(\sqrt{u})\mathrm{d}u,\\
\end{split}
\end{equation}
where $ z(\gamma) = \frac{(4 \pi)^2L_{(1)}^e L_{(2)}^s L_{\mathrm{add}}N_u\gamma}{p_{(1)}^eG_{(1)}^e G_{(2)}^s \lambda_u^2}$.
\vspace{1mm}

The average time delay of uplink $\overline{\tau}_{\mathrm{up}}$:

\begin{equation}
\begin{split}
\overline{\tau}_{\mathrm{up}} &= \mathbb{E}\left[\frac{M}{B\log_2({1+\mathrm{SNR}_{\mathrm{up}}})} \right]\\
&= \int_{0}^{\infty} \frac{M}{B\log_2({1+\gamma})} f_{{\mathrm{SNR}}_\mathrm{up}}(\gamma) \mathrm{d}\gamma.
\end{split}
\end{equation}
\vspace{3mm}


\bibliographystyle{IEEEtran}
\bibliography{references}

\end{document}